\newtheorem{theorem}{Theorem}
\newtheorem{lemma}[theorem]{Lemma}
\theoremstyle{definition}
\DeclareMathOperator{\diam}{diam}
\newcommand{\fin}{\mathrm{f}}
\DeclareMathOperator{\vol}{Vol}
\begin{document}
\title{A New Proof of an Arithmetic Riemann-Roch Theorem}
\author{Sam Mundy}
\date{}
\maketitle
\begin{abstract}
In this paper, we give a new proof of an arithmetic analogue of the Riemann-Roch Theorem, due originally to Serge Lang. Lang's result was first proved using the lattice point geometry of Minkowski. By contrast, our proof is completely adelic. It has the conceptual advantage that it uses a different analogue of the Riemann-Roch theorem proved by Tate in his thesis, in a manner similar to the proof of the Riemann-Roch theorem for curves over finite fields which uses Tate's theorem. Thus our proof provides a bridge between a lot of the Riemann-Roch theory that exists in arithmetic.
\end{abstract}
\section*{Introduction}
Throughout this paper, $K$ is a number field of degree $n$, with $r$ real embeddings and $2s$ complex embeddings, so that $n=r+2s$. The discriminant of $K$ is denoted $\Delta_K$. The ring of integers in $K$ is written $\mathcal{O}_K$, and the group of fractional ideals of it is denoted $J(\mathcal{O}_K)$. The norm of a fractional ideal $\mathfrak{a}$ is denoted $\mathbb{N}\mathfrak{a}$ and is defined, when $\mathfrak{a}$ is an ideal, to be the cardinality of the set $\mathcal{O}_K/\mathfrak{a}$. The set of places of $K$ is denoted $V_K$, and it is the disjoint union of the finite and infinite places, denoted respectively $V_\fin$ and $V_\infty$. The completion of $K$ at a place $v\in V_K$ is denoted $K_v$. If the place $v$ is finite, then the valuation ring of $K_v$ is denoted $\mathcal{O}_v$.\\
\indent The ring of adeles of $K$ is denoted $\mathbb{A}_K$, and the group of ideles, $\mathbb{I}_K$. The norm of an idele is $x$ is denoted $\Vert x\Vert$, and the set of ideles of norm $1$ is written $\mathbb{I}_K^1$. The embedding $K\subset\mathbb{A}_K$ is diagonal and restricts to an embedding $K^\times\subset\mathbb{I}_K^1$ by the product formula. Then we have the standard results that $K$ and $K^\times$ embed discretely and cocompactly in both cases, under the usual topologies on $\mathbb{A}_K$ and $\mathbb{I}_K$. The fact that $K$ and $K^\times$ are cocompact under their respective embeddings means that $\mathbb{A}_K/K$ and $\mathbb{I}_K^1/K^\times$ are compact, each with the quotient topology.\\
\indent The groups $\mathbb{A}_K$, $\mathbb{I}_K$ and $\mathbb{I}_K^1$ are all locally compact as abelian topological groups. We will apply the machinery of Fourier analysis on locally compact abelian groups to these, as Tate did in his famous thesis. His thesis was published in Cassels and Fr\"{o}hlich \cite{CF}, and is exposed in more generality (i.e., for all local and global fields) in Ramakrishnan and Valenza \cite{RV}. For a purely analytic account of the Fourier analysis of locally compact abelian groups, see Folland \cite{Fol}. This Fourier analysis constitutes the basic tools of this paper, and the only theorems from Tate's thesis we will use are his Riemann-Roch Theorem, and the theorems necessary to state it. We will recall these in a moment.\\
\indent First, we must fix a character $\psi$ on $\mathbb{A}_K$. That is, $\psi$ is a continuous homomorphism from $\mathbb{A}_K$ to the circle $S^1\subset\mathbb{C}^\times$ with the usual euclidean topology. We require only that this character is nontrivial and has the property that it is trivial on $K$. A choice of such a character is immaterial for our purposes, and we need only its existence. A character like this is easy to construct (take, for instance, what is called the ``standard character" in Ramakrishnan and Valenza \cite{RV}), but we will not construct one here.\\
\indent The map $x\mapsto(y\mapsto\psi(xy))$ is a topological isomorphism between $\mathbb{A}_K$ and its dual group. Under this identification, $K=K^\perp$, where $K^\perp$ is defined to be the set of characters on $\mathbb{A}_K$ which are trivial on $K$.\\
\indent The Haar measure will be taken to be the one which is self dual. It comes from measures on the local components of $\mathbb{A}_K$ which satisfy similar dualities. Namely it comes from: the Lebesgue measure on $\mathbb{R}$; twice the Lebesgue measure on $\mathbb{C}$; and, when $v\in V_\fin$, it comes from $(\mathbb{N}\mathfrak{D})^{-1/2}$ times the measure on $K_v$ which gives $\mathcal{O}_v$ measure $1$. Here the $\mathfrak{D}$ is the local different. This choice has the consequence that the product of the measures of all valuation rings at the finite places of $K$ equals $\vert\Delta_K\vert^{-1/2}$.\\
\indent All Fourier transforms will be taken with respect to $\psi$ and the above Haar measure, which we will denote by $dx$ if it is being integrated, and by $\vol$ if it is not. We denote the Fourier transform with a hat:
\[\hat{f}(y)=\int_{\mathbb{A}_K}f(x)\psi(xy)\,dx.\]
Tate's Riemann-Roch Theorem is
\begin{theorem}[Riemann-Roch Theorem of Tate]
Let $K$ be a global field. Let $f\in L^1(\mathbb{A}_K)$ be continuous. Assume that the series $\sum_{\alpha\in K}f(y(x+\alpha))$ converges for all $x\in\mathbb{A}_K$ and all $y\in\mathbb{I}_K$, uniformly in $x$ on compact subsets of $\mathbb{A}_K$ and uniformly in $y$ on compact subsets of $\mathbb{I}_K$. Assume also that $\sum_{\alpha\in K}\hat{f}(\alpha y)$ converges for all $y\in\mathbb{I}_K$. Then
\[\sum_{\alpha\in K}f(\alpha y)=\frac{1}{\Vert y\Vert}\sum_{\alpha\in K}\hat{f}\left(\frac{\alpha}{y}\right).\]
\end{theorem}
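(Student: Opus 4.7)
The plan is to derive the identity from the Poisson summation formula on $\mathbb{A}_K$, which itself comes from Fourier expansion on the compact quotient $\mathbb{A}_K/K$. I would first establish the case $y=1$, namely $\sum_{\alpha\in K}f(\alpha)=\sum_{\alpha\in K}\hat{f}(\alpha)$, and then deduce the general statement by a change of variables.

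For the $y=1$ case, set $F(x)=\sum_{\alpha\in K}f(x+\alpha)$. The uniform convergence assumption at $y=1$ makes $F$ continuous on $\mathbb{A}_K$, and since $F$ is visibly $K$-invariant, it descends to a continuous function on the compact abelian group $\mathbb{A}_K/K$. Under the self-duality $\mathbb{A}_K\cong\widehat{\mathbb{A}_K}$ induced by $\psi$, the characters of $\mathbb{A}_K/K$ correspond to $K^\perp=K$, so the Fourier series of $F$ is indexed by $K$. An unfolding computation gives the $\beta$-th Fourier coefficient as
\[c_\beta=\int_{\mathbb{A}_K/K}F(x)\psi(-\beta x)\,dx=\int_{\mathbb{A}_K}f(x)\psi(-\beta x)\,dx=\hat{f}(-\beta);\]
here one uses that $\psi$ is trivial on $K$ to see the change of domain, and that the quotient measure on $\mathbb{A}_K/K$ is normalized so that $K$ has covolume $1$, which is implicit in the identification $K=K^\perp$ under the self-dual measure. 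Fourier inversion on $\mathbb{A}_K/K$, evaluated at $x=0$, then yields $\sum_{\alpha\in K}f(\alpha)=F(0)=\sum_{\beta\in K}\hat{f}(-\beta)=\sum_{\beta\in K}\hat{f}(\beta)$, where the last equality comes from replacing $\beta$ by $-\beta$.

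For the general case, I would apply the $y=1$ identity to $g(x)=f(yx)$. A direct change of variables gives $\hat{g}(w)=\|y\|^{-1}\hat{f}(w/y)$, and the convergence hypotheses on $f$ at the given idele $y$ become precisely the convergence hypotheses on $g$ at $1$. Substituting then produces the stated formula.

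The main obstacle will be rigorously justifying pointwise Fourier inversion at $x=0$: the hypothesis asserts convergence, but not absolute convergence, of $\sum_\alpha\hat{f}(\alpha y)$, so one cannot directly sum the Fourier series of $F$ termwise. This is handled by exploiting the continuity of $F$ together with the full strength of the convergence hypothesis (which is assumed for \emph{all} $y\in\mathbb{I}_K$), thereby allowing evaluation of the Fourier series at any point via an averaging or Abel-summation argument, as is done in Tate's thesis.
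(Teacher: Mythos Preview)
The paper does not give a proof of this theorem; it is stated in the introduction as a result of Tate, with references to \cite{CF} and \cite{RV}, and is then used as a black box in the proof of Theorem~7. So there is no proof in the paper to compare your proposal against.

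That said, your outline is exactly the standard argument found in those references: periodize $f$ to obtain a continuous function $F$ on the compact group $\mathbb{A}_K/K$, identify the dual of $\mathbb{A}_K/K$ with $K^\perp=K$ via $\psi$, compute the Fourier coefficients of $F$ by unfolding to get $\hat f(-\beta)$, apply Fourier inversion at $x=0$, and finally reduce the general $y$ to $y=1$ via the substitution $g(x)=f(yx)$ together with the scaling $\hat g(w)=\Vert y\Vert^{-1}\hat f(w/y)$. All of this is correct.

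You are right to flag the pointwise Fourier inversion at $0$ as the only step that is not entirely formal under the hypotheses \emph{exactly} as stated here (mere convergence, not absolute convergence, of $\sum_\alpha\hat f(\alpha y)$). In the references the hypothesis is typically absolute convergence, or one restricts from the outset to Schwartz--Bruhat functions, in which case the issue evaporates. Since the paper only ever applies Tate's theorem to adelic Schwartz functions (as it explicitly remarks right after stating the theorem), this subtlety has no bearing on the paper's main argument.
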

The adeles decompose as a product $\mathbb{A}_K=\mathbb{A}_\fin\times(\mathbb{R}^r\times\mathbb{C}^s)$, where the first component is the restricted direct product of all completions of $K$ at finite places, with respect to their valuation rings. We will call a function on $\mathbb{A}_K$ an adelic Schwartz function if it is a product of a continuous function with compact support on $\mathbb{A}_\fin$, and a Schwartz function on $\mathbb{R}^r\times\mathbb{C}^s$, in the usual sense. Any adelic Schwartz function satisfies the hypotheses of Tate's theorem (see \cite{RV}), and all functions to which we apply this theorem will obviously be adelic Schwartz.\\
\indent We recall the notion of replete ideal (See Neukirch \cite{Neu}, chapter 3). A replete ideal is an element of the group
\[J(\overline{\mathcal{O}}_K)=J(\mathcal{O}_K)\times\mathbb{R}_{>0}^{r+s},\]
where $\mathbb{R}_{>0}$ is the group of positive real numbers under multiplication. We think of elements of this group as ideals with an infinite part, and we think of the components $\mathbb{R}_{>0}$ as being indexed by $V_\infty$. With this in mind, let $\mathfrak{a}$ be a replete ideal with finite and infinite parts $\mathfrak{a}_\fin$ and $(n_v)_{v\in V_\infty}$, respectively. Embed $\mathfrak{a}_\fin$ into $\mathbb{R}^r\times\mathbb{C}^s$ as is usually done in the Minkowski theory. We define $H^0(\mathfrak{a})$ to be the set of elements of $\alpha\in\mathfrak{a}_\fin$, viewed in $\mathbb{R}^r\times\mathbb{C}^s$, such that $\vert\alpha\vert_v\leq n_v^{-1}$ for all $v\in V_\infty$, where $\vert\cdot\vert_v$ is, of course, the absolute value corresponding to the place $v$.\\
\indent Define the norm of the replete ideal $\mathfrak{a}=\mathfrak{a}_\fin\times(n_v)_{v\in V_\infty}$ to be
\[\mathbb{N}\mathfrak{a}=\mathbb{N}\mathfrak{a}_\fin\times\prod_{v\in V_\infty}n_v^{f_v},\]
where $f_v$ is $1$ or $2$ depending on whether $v$ is real or complex, respectively.\\
\indent The next theorem is the result of Lang. It was first proven by him in the sixties, heavily using tools from Minkowski theory, and it was published in his book \cite{LangANT}.
\begin{theorem}[Riemann-Roch Theorem For Number Fields]
As $\mathfrak{a}$ ranges through $J(\bar{\mathcal{O}}_K)$, we have the estimate
\[\vert H^0(\mathfrak{a}^{-1})\vert=\frac{2^r(2\pi)^s}{\sqrt{\vert\Delta_K\vert}}\mathbb{N}\mathfrak{a}+O((\mathbb{N}\mathfrak{a})^{1-\frac{1}{n}}).\]
Here, as usual, the $O$-term denotes a function which is bounded by a constant times its argument.
\end{theorem}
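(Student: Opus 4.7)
My plan is to mimic the adelic proof of the Riemann-Roch theorem for curves over a finite field, by applying Tate's theorem to a carefully chosen adelic Schwartz function. Fix an idele $y\in\mathbb{I}_K$ whose finite components have $v$-adic valuation equal to $v(\mathfrak{a}_\fin)$ and whose archimedean components satisfy $|y_v|_v=n_v^{-1}$; then $\Vert y\Vert^{-1}=\mathbb{N}\mathfrak{a}$. Let $B=\prod_{v\in V_\infty}B_v$ denote the ``unit box'' at infinity, with $B_v=[-1,1]$ at real places and the closed unit disk at complex places, so that $\vol(B)=2^r(2\pi)^s$ given our choice of twice the Lebesgue measure on $\mathbb{C}$. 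For a Schwartz function $g$ on $\mathbb{R}^r\times\mathbb{C}^s$, set $f=1_{\hat{\mathcal{O}}}\otimes g$. Since $1_{\hat{\mathcal{O}}}(\alpha y_\fin)\neq 0$ iff $\alpha\in\mathfrak{a}_\fin^{-1}$, the quantity $\sum_{\alpha\in K}f(\alpha y)=\sum_{\alpha\in\mathfrak{a}_\fin^{-1}}g(\alpha y_\infty)$ is a smoothed count of $H^0(\mathfrak{a}^{-1})$: $|\alpha y_v|_v=|\alpha|_v/n_v\leq 1$ exactly when $|\alpha|_v\leq n_v$.

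A direct local computation gives $\hat{1}_{\mathcal{O}_v}=\vol(\mathcal{O}_v)\cdot 1_{\mathfrak{D}_v^{-1}}$, so Tate's Riemann-Roch Theorem becomes
\[\sum_{\alpha\in\mathfrak{a}_\fin^{-1}}g(\alpha y_\infty)=\mathbb{N}\mathfrak{a}\cdot|\Delta_K|^{-1/2}\sum_{\alpha\in\mathfrak{a}_\fin\mathfrak{D}^{-1}}\hat{g}(\alpha/y_\infty).\]
The $\alpha=0$ term on the right contributes $\mathbb{N}\mathfrak{a}\cdot|\Delta_K|^{-1/2}\int g$, which for $g$ close to $1_B$ is precisely the predicted main term $\frac{2^r(2\pi)^s}{\sqrt{|\Delta_K|}}\mathbb{N}\mathfrak{a}$.

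To pass from a smoothed count to $|H^0(\mathfrak{a}^{-1})|$ with a controlled error, I would use a sandwich argument. Fix a smooth nonnegative mollifier $\rho$ with $\int\rho=1$ and compact support, set $\rho_\epsilon(x)=\epsilon^{-n}\rho(x/\epsilon)$, and take $g_\pm=1_{(1\pm c\epsilon)B}*\rho_\epsilon$; then $g_-\leq 1_B\leq g_+$ and $\int g_\pm=\vol(B)(1+O(\epsilon))$. Applying the identity above to $f_\pm=1_{\hat{\mathcal{O}}}\otimes g_\pm$ sandwiches $|H^0(\mathfrak{a}^{-1})|$ between two expressions whose main terms equal $\frac{2^r(2\pi)^s}{\sqrt{|\Delta_K|}}\mathbb{N}\mathfrak{a}\cdot(1+O(\epsilon))$, plus their respective dual tails; the smoothing alone contributes $O(\epsilon\,\mathbb{N}\mathfrak{a})$ to the total error.

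The main obstacle is bounding the dual tails for the $f_\pm$. I would combine three ingredients: a Minkowski-type lower bound $\max_{v\in V_\infty}|\alpha/y_v|_v\geq c_K(\mathbb{N}\mathfrak{a})^{1/n}$ for nonzero $\alpha\in\mathfrak{a}_\fin\mathfrak{D}^{-1}$, derived from the inequality $|\nm_{K/\mathbb{Q}}(\alpha)|\geq\mathbb{N}(\mathfrak{a}_\fin\mathfrak{D}^{-1})$ together with the archimedean product formula; the coordinatewise polynomial decay of the factor $\hat{1}_{(1\pm c\epsilon)B}$ together with the Schwartz decay of $\hat{\rho}(\epsilon\xi)$ appearing in the factorization $\hat{g}_\pm(\xi)=\hat{1}_{(1\pm c\epsilon)B}(\xi)\hat{\rho}(\epsilon\xi)$; and a lattice-sum estimate over $\mathfrak{a}_\fin\mathfrak{D}^{-1}$ using these decay rates. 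Setting $\epsilon=(\mathbb{N}\mathfrak{a})^{-1/n}$ balances the two sources of error, and a careful accounting shows that both the smoothing error and the dual tails are $O((\mathbb{N}\mathfrak{a})^{1-1/n})$. The subtle point is that the factor $\mathbb{N}\mathfrak{a}$ coming from $\Vert y\Vert^{-1}$ in front of the dual sum must be absorbed using the pointwise decay of $\hat{1}_B$ itself, not merely by the Schwartz decay of the mollifier, since the smallest lattice vectors $\alpha/y_\infty$ sit at the scale $\epsilon^{-1}$ where the mollifier factor alone is only $O(1)$.
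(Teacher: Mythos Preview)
Your approach is viable but takes a genuinely different route from the paper's.

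The paper does not mollify at infinity with a generic bump; it convolves the full adelic $\chi_B$ with a function $\varphi$ supported in a \emph{fundamental domain} $D$ for $\mathbb{A}_K/K$. The payoff is that for every $\alpha\in K^\times$ one has $\int_D\psi(\alpha x)\,dx=0$ (a nontrivial character integrated over a compact quotient), so $\int_{taB}\psi(\alpha x)\,dx$ equals minus the integral over the ragged boundary layer left after tiling $taB$ by $K$-translates of $D$, and is therefore bounded by an ``adelic surface area'' $\vol((taB)_D)-\vol(taB)\asymp t^{n-1}$. This furnishes the saving of one power of $t$ uniformly in $\alpha$, after which the remaining sum $\sum_\alpha|\widehat\varphi(at\alpha)|$ is handled by Schwartz decay of $\widehat\varphi$. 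Uniformity over all replete ideals is then obtained not from any lattice geometry but by observing that the counting function is $K^\times$-invariant and descending to the compact space $\mathbb{I}_K^1/K^\times$, where an upper-semicontinuous infimum is bounded.

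Your route instead controls the dual tail by combining the $O(1/|\xi|)$ decay of $\widehat{1_B}$ with a lattice count. This works, but the step you pass over---``a lattice-sum estimate over $\mathfrak{a}_\fin\mathfrak{D}^{-1}$''---is precisely where the dependence on $\mathfrak{a}$ must be eliminated, and you should make it explicit. Your norm inequality gives the shortest nonzero vector of the rescaled dual lattice a length $\geq c_K(\mathbb{N}\mathfrak{a})^{1/n}$; a packing argument (disjoint balls of radius $\lambda_1/2$) then bounds the number of lattice points in each dyadic shell about radius $2^k(\mathbb{N}\mathfrak{a})^{1/n}$ by $O_K(2^{kn})$, and combining this with $|\widehat{1_B}(\xi)|\ll 1/|\xi|$ and the Schwartz decay of $\widehat\rho(\epsilon\xi)$ does give the uniform $O((\mathbb{N}\mathfrak{a})^{1-1/n})$ once you set $\epsilon=(\mathbb{N}\mathfrak{a})^{-1/n}$. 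Without this step a badly skewed choice of the archimedean data $(n_v)$ could in principle defeat the estimate. In effect, you trade the paper's purely adelic mechanism (character cancellation over a fundamental domain, plus compactness of the norm-one idele class group) for an arithmetic input---the norm lower bound for nonzero elements of a fractional ideal---which is closer in spirit to the classical Minkowski argument the paper was written to circumvent.
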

We prove this theorem adelically using the Fourier analysis described above. To do this, we will let $B\subset\mathbb{A}_K$ be the product of the closed unit balls in $K_v$, $v\in V_K$, and let $\chi_B$ be its characteristic function. The main result will be to prove that as $a$ ranges through $\mathbb{I}_K$, we have the estimate
\[\sum_{\alpha\in K}\chi_B(\alpha a^{-1})=\frac{2^r(2\pi)^s}{\sqrt{\vert\Delta_K\vert}}\Vert a\Vert+O(\Vert a\Vert^{1-\frac{1}{n}}).\]
\indent The first instinct could be to apply Tate's Riemann-Roch theorem to $\chi_B$, like one does in the case of function fields to prove the Riemann-Roch theorem for curves over finite fields (see Ramakrishnan and Valenza \cite{RV}). This would not work, however, since the infinite components of $\widehat{\chi}_B$ violate the hypotheses of that theorem for reasons related to continuity. Instead, we will convolve $\chi_B$ with a bump function, use Riemann-Roch to estimate a sum involving the resulting function, and compare this sum to the one above. The estimations we need will rely on a notion of surface area for adelic regions. All of this we be explained now.
\section*{The Proof}
\indent Fix for now a \it region \rm $D\subset\mathbb{A}_K$, by which we mean $D$ is compact and equal to the closure of its interior, and the finite component of $D$ is compact and open. Given another region $E\subset\mathbb{A}_K$, we define
\[E_D=\lbrace x+(y_1-y_0)\mid x\in E,\, y_1,y_0\in D\rbrace.\]
This region is simply the union of all translates of $D$ which intersect $E$.\\
\indent Actually, we will want to consider this construction when $D$ is skewed. Given an idele $a\in\mathbb{I}_K$ and a subset $X\subset\mathbb{A}_K$, we define $aX=\lbrace ax\mid x\in X\rbrace$. If $t>0$ is a real number, we may also view $t$ as the idele whose finite components are all $1$ and whose infinite components are all $t$, so that the expression $tD$ makes sense and is a region. Then we define the \it adelic surface area \rm of $E$ with respect to $D$ to be the derivative
\[S_{D}(E)=\lim_{t\to 0^+}\frac{\vol(E_{tD})-\vol(E)}{t},\]
if it exists.
\begin{lemma}
\label{pf:C}
Let $D,E$ be regions for which $S_{aD}(E)$ exists for all norm $1$ ideles $a$. Let $C:[0,1]\times\mathbb{I}_K^1\to\mathbb{R}$ be defined by
\[C(t,a)=\begin{cases}
\frac{\vol(E_{t,aD})-\vol(E)}{t}&\textrm{if}\,\,t\ne 0\\
S_{aD}(E)&\textrm{if}\,\, t=0.
\end{cases}\]
Then $C$ is continuous and defined everywhere.
\end{lemma}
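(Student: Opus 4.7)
Rewrite $E_{t(aD)}$ as a Minkowski sum: by the subscript definition, $E_{tD}=E+t(D-D)$, so $E_{t(aD)}=E+t\cdot a(D-D)$. Set $V(t,a):=\vol(E+t\cdot a(D-D))$; then $C(t,a)=(V(t,a)-\vol(E))/t$ for $t>0$, while $C(0,a)=S_{aD}(E)$ is by assumption the right $t$-derivative of $V$ at $t=0$.

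\textbf{Step 1 (continuity of $V$).} I would first show that $V$ is jointly continuous on $[0,1]\times\mathbb{I}_K^1$. Using continuity of multiplication $\mathbb{I}_K\times\mathbb{A}_K\to\mathbb{A}_K$, the map $(t,a)\mapsto t\cdot a(D-D)$ is continuous into the compact subsets of $\mathbb{A}_K$ equipped with the Hausdorff metric, where $D-D$ is compact as the image of $D\times D$ under the subtraction map. Taking the Minkowski sum with the compact set $E$ preserves Hausdorff continuity. Because $E$ and $D$ are each the closure of their interiors, the boundary of every $E+t\cdot a(D-D)$ has Lebesgue measure zero, so volume is continuous under Hausdorff convergence of such families (all of which lie in a common compact set as $(t,a)$ ranges over any compactum). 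This gives continuity of $V$; since $V(0,\cdot)=\vol(E)$ is constant, continuity of $C$ on $(0,1]\times\mathbb{I}_K^1$ follows immediately.

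\textbf{Step 2 (continuity at $t=0$; the main obstacle).} For continuity at a point $(0,a_0)$, I would bound $|C(t,a)-C(t,a_0)|$ uniformly in small $t>0$ for $a$ near $a_0$, and then invoke the one-variable limit $C(t,a_0)\to S_{a_0D}(E)$ as $t\to 0^+$, which holds by assumption. Writing $u=a/a_0\in\mathbb{I}_K^1$ so that $a(D-D)=a_0u(D-D)$, set $\delta_a:=d_H(a_0u(D-D),a_0(D-D))$; this tends to $0$ as $a\to a_0$ by Hausdorff continuity of the idele action. Then each of $E+t\cdot a(D-D)$ and $E+t\cdot a_0(D-D)$ lies in the $t\delta_a$-thickening of the other, so their volumes differ by at most $c(a_0)\cdot t\delta_a$, where $c(a_0)$ is a uniform bound on the Minkowski surface area of the enlargements $E+t\cdot a_0(D-D)$ for small $t$. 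Dividing by $t$ yields $|C(t,a)-C(t,a_0)|\leq c(a_0)\delta_a\to 0$, which combined with the one-variable limit at $a_0$ closes the argument.

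The principal technical difficulty is securing the uniform surface-area bound $c(a_0)$. I expect this to follow from the hypothesis that $S_{a_0D}(E)$ exists and is finite, which already controls the adelic Minkowski content of $E$ in the direction of $a_0D$ and propagates (with a loss of a bounded factor) to the small thickenings $E+t\cdot a_0(D-D)$. Once $c(a_0)$ is secured, joint continuity of $C$ at $(0,a_0)$ drops out of the sandwich estimate combined with Step 1.
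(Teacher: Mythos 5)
Your rewriting of $E_{taD}$ as the Minkowski sum $E+t\cdot a(D-D)$ is correct, and the overall architecture (continuity for $t>0$, then a uniform-in-$t$ comparison near $t=0$) is reasonable, but the proof has a genuine gap precisely at the point you yourself flag as the ``principal technical difficulty.'' Step 2 requires a bound of the form $\vol(A_t+\epsilon U)-\vol(A_t)\leq c(a_0)\epsilon$, uniformly over the sets $A_t=E+t\cdot a_0(D-D)$ for small $t$, where $U$ is a small archimedean ball. The hypothesis that $S_{a_0D}(E)$ exists gives only the existence of the one-sided derivative at $t=0$ of the single function $t\mapsto\vol(E+t\cdot a_0(D-D))$; that controls thickenings of $E$ by dilates of the one set $a_0(D-D)$, and it does not bound the Minkowski content of the already-enlarged sets $A_t$ with respect to Euclidean thickenings. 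Even granting convexity and monotonicity, differentiability of a function at the single point $t=0$ does not yield a uniform Lipschitz estimate on its increments $g(t+\epsilon)-g(t)$ for $t$ bounded away from the scale of $\epsilon$, so the key inequality $|C(t,a)-C(t,a_0)|\leq c(a_0)\delta_a$ is not established, and Step 2 collapses. A secondary problem is in Step 1: the claim that the boundary of $E+t\cdot a(D-D)$ has measure zero ``because $E$ and $D$ are closures of their interiors'' is false as stated (a compact set equal to the closure of its interior can have boundary of positive measure), so the volume-continuity under Hausdorff convergence also needs a different justification.

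The paper sidesteps both issues by comparing regions multiplicatively rather than additively. After translating so that $0\in D$, for $b$ in a suitable neighborhood of $a$ in $\mathbb{I}_K^1$ one has $(1+\epsilon)^{-1}aD\subset bD\subset(1+\epsilon)aD$: at non-archimedean places a sufficiently small perturbation does not change $b_vD_v$ at all, and at archimedean places the perturbation is absorbed into a dilation. This gives $E_{(1+\epsilon)^{-1}taD}\subset E_{tbD}\subset E_{(1+\epsilon)taD}$ and hence the sandwich
\[(1+\epsilon)^{-1}C((1+\epsilon)^{-1}t,a)\leq C(t,b)\leq(1+\epsilon)C((1+\epsilon)t,a),\]
which converts the perturbation in the idele into a perturbation of the parameter $t$ alone. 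Joint continuity, including at $t=0$, then follows directly from the single-variable limit defining $S_{aD}(E)$, with no surface-area bound and no measure-zero-boundary claim required. If you wish to salvage your additive route, you would have to prove the uniform Minkowski-content bound for the sets $A_t$ directly, and for general regions $E,D$ this is not available from the stated hypotheses.
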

\begin{proof}
Let $a=(a_v)_{v\in V_K}$ be a norm $1$ idele. Since volume is translation invariant, we may assume $0\in D$, and hence in $aD$. Then there is a finite subset $S\subset V_K$, including the infinite places, such that $aD$ is the product $\prod_{v\notin S}\mathcal{O}_v$ away from the places in $S$. Let $\epsilon>0$ and let $b=(b_v)$ be a norm $1$ idele such that: $\Vert b_v/a_v-1\Vert<\epsilon$ for all infinite places $v$; $\Vert b_v/a_v-1\Vert$ is so small for $v\in S\cap V_\fin$ that skewing $aD$ at the place $v$ does not change the region $aD$; and $b_v\in\mathcal{O}_v^\times$ for $v\notin S$. The set of all $b$ satisfying these conditions is open in $\mathbb{I}_K^1$.\\
\indent Now these conditions on $b$ imply that $bD=(b/a)aD$ is such that $(1+\epsilon)^{-1}bD\subset aD$ and $(1+\epsilon)^{-1}aD\subset bD$. Thus
\[(1+\epsilon)^{-1}aD\subset bD\subset(1+\epsilon)aD,\]
and hence, for $t>0$,
\[E_{(1+\epsilon)^{-1}taD}\subset E_{tbD}\subset E_{(1+\epsilon)taD}.\]
Therefore, taking volumes, subtracting the volume of $E$ and dividing by $t$, we get
\[(1+\epsilon)^{-1}C((1+\epsilon)^{-1}t,a)\leq C(t,b)\leq (1+\epsilon)C((1+\epsilon)t,a)\]
This proves that $C$ is continuous on $(0,1]\times\mathbb{I}_K^1$ and taking the limit of the above estimate as $t\to 0$ gives continuity everywhere.
\end{proof}
In what follows, $B\subset\mathbb{A}_K$ is the product of the closed unit balls at each place, $B=\prod_{v\in V_\fin}\mathcal{O}_v\times\prod_{v\in V_\infty}\overline{B(0,1)}$. This is a region in $\mathbb{A}_K$. Also, $D$ will denote the closure of a fixed fundamental domain of $\mathbb{A}_K$ modulo $K$ for which $S_{aD}(E)$ exists for all norm $1$ ideles $a$. Also, we let $\varphi$ be a continuous function with support in $D$ and total integral $1$, such that the series $\sum_{\alpha\in K}\vert\widehat{\varphi}(\alpha b)\vert$ converges and is continuous for ideles $b$. We will exhibit such a pair $D,\varphi$ in a moment, but for now, we will assume that they exist.\\
\indent Now for two functions $g,h$ on $\mathbb{A}_K$, define their convolution $g*h$ as usual:
\[g*h(x)=\int_{\mathbb{A}_K}g(y)h(x-y)\,dy.\]
Then $(g*h)\,\,\widehat{}\,\,=\hat{g}\hat{h}$. Let $f=\chi_B*\varphi$.
\begin{lemma}
\label{pf:d}
Let $a\in\mathbb{I}_K^1$ and $t>1$. There is a constant $c_1$, depending only on $D$ and $\varphi$, such that
\[\sum_{\alpha\in K}\chi_B(\alpha(ta)^{-1})\leq\sum_{\alpha\in K}f(\alpha(ta)^{-1})+c_1.\]
\end{lemma}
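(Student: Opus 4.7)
The plan is to expand $\chi_B - f$ using $\int_D \varphi\,dz = 1$, giving the pointwise identity
\[
\chi_B(x) - f(x) = \int_D \varphi(z)\bigl(\chi_B(x) - \chi_B(x-z)\bigr)\,dz.
\]
Substituting $x = \alpha(ta)^{-1}$, summing over $\alpha \in K$, and interchanging sum and integral (legitimate because $\varphi$ has compact support in $D$ and only finitely many $\alpha$ contribute to each compact slice of the integrand) yields
\[
\sum_\alpha \chi_B(\alpha(ta)^{-1}) - \sum_\alpha f(\alpha(ta)^{-1}) = \int_D \varphi(z)\bigl[N - N(z)\bigr]\,dz,
\]
where $N = \#(K\cap taB)$ and $N(z) = \#(K\cap(taB+taz))$.

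To control the integrand I use the fundamental-domain property of $D$. For each $z\in D$ there is a unique $\alpha_0(z)\in K$ with $d(z):=taz-\alpha_0(z)\in D$, and by translation-invariance of $K$ the lattice count is unchanged when we shift: $N(z) = \#(K\cap(taB+d(z)))$ with $d(z)\in D$ of uniformly bounded size. The positive part $(N-N(z))^{+}$ is then bounded by the count of lattice points in the one-sided boundary shell $taB\setminus(taB+d(z))$; after rescaling by $(ta)^{-1}$, this is a thin shell of width $|d(z)/(ta)|\to 0$ along $\partial B$.

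\textbf{The main obstacle} is to show that this integrated shell count is bounded by a constant $c_1$ depending only on $D$ and $\varphi$, uniformly in $t>1$ and $a\in\mathbb{I}_K^1$; the naive pointwise estimate gives only $O(t^{n-1})$ on an individual shell, so the averaging against $\varphi(z)\,dz$ must absorb this growth. Here I plan to invoke the adelic-surface-area framework of Lemma~\ref{pf:C}: the rescaled shell volume is comparable to $|d(z)/(ta)|$ times the surface area $S_{aD}(B)$, and by Lemma~\ref{pf:C} the function $(t,a)\mapsto C(t,a)$ is continuous. Using the compactness of $\mathbb{I}_K^1/K^\times$ (so that, up to $K^\times$-scaling, $a$ ranges over a compact set) one extracts a uniform surface-area bound, and the $\varphi$-weighted averaging combined with the $K$-periodicity of $d\mapsto\#(K\cap(taB+d))$ collapses the pointwise $O(t^{n-1})$ shell estimate to the required constant $c_1$, completing the proof.
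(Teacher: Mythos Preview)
Your proposal takes a substantially different route from the paper, and the step you flag as ``the main obstacle'' is a genuine gap that your sketch does not close.

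The paper argues termwise and never touches the surface-area machinery. After writing
\[
f((ta)^{-1}\alpha)=\int_{taB}\varphi\bigl((ta)^{-1}(\alpha-x)\bigr)\,t^{-n}\,dx,
\]
it observes that, because $\int\varphi=1$, this integral equals $\chi_B((ta)^{-1}\alpha)$ whenever the support of $x\mapsto\varphi((ta)^{-1}(\alpha-x))$ lies entirely inside or entirely outside $taB$. The exceptional $\alpha$ are exactly those for which the relevant translate of $D$ meets $\partial B$; the paper counts these directly, notes that each contributes at most $1$ to the difference, and takes $c_1$ to be that count. No averaging, no Lemma~\ref{pf:C}, no compactness of $\mathbb{I}_K^1/K^\times$: those tools enter only later, in Lemma~\ref{pf:e} and in the proof of the main theorem, to control the \emph{dual} side of Tate's Riemann--Roch. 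The present lemma is meant to be the cheap half of the comparison.

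Your argument, by contrast, reaches $\int_D\varphi(z)\,[N-N(z)]\,dz$ and then tries to bound shell counts. The $K$-periodicity reduction to a shift $d(z)\in D$ is fine, but from there the reasoning breaks down. For each fixed $z$ the shell $taB\setminus(taB+d(z))$ really does contain on the order of $t^{n-1}$ lattice points when $n\ge 2$, and averaging an $O(t^{n-1})$ quantity against the probability weight $\varphi(z)\,dz$ still gives $O(t^{n-1})$; nothing in your outline supplies a cancellation mechanism. Put differently, after your change of variables $\int_D\varphi(z)N(z)\,dz$ is a smoothed average of $d\mapsto\#(K\cap(taB+d))$ over the fundamental domain, so it approximates $\vol(taB)/\vol(D)$, and the residual $N-\vol(taB)/\vol(D)$ is precisely the lattice-point discrepancy, of order $t^{n-1}$ rather than $O(1)$. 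Lemma~\ref{pf:C} cannot rescue this: it controls \emph{volumes} of shells (continuously in $(t,a)$), not the oscillation of the point count about its mean, so invoking it together with compactness yields at best a bound of the form $C\cdot t^{n-1}$, which is exactly the ``naive'' estimate you were trying to beat.
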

\begin{proof}
We have
\[f((ta)^{-1}\alpha)=\int\chi_B(x)\varphi((ta)^{-1}\alpha-x)\,dx=\int_{taB}\varphi((ta)^{-1}(\alpha-x))t^{-n}\,dx.\]
Since $\varphi$ has total integral $1$, this integral is equal to $\chi_B((ta)^{-1}\alpha)$ whenever the support of $\varphi((ta)^{-1}(\alpha-x))$ is contained completely inside or outside $taB$, i.e., it does not intersect the boundary of $taB$. Now the support of $\varphi((ta)^{-1}(\alpha-x))$ intersects the boundary of $taB$ only when $ta(\alpha-D)$ intersects the boundary of $taB$, which happens if and only if $\alpha-D$ intersects the boundary of $B$, and this happens only finitely many times. Thus $f((ta)^{-1}\alpha)\ne\chi_B((ta)^{-1}\alpha)$ for finitely many $\alpha$, say $c$ of them, and the difference is at most $1$. Thus we are done if we take $c_1=c$.
\end{proof}
\begin{lemma}
\label{pf:e}
There is a continuous function $C_1$ on $\mathbb{I}_K^1$ such that, in the notation of Lemma \ref{pf:C},
\[\sum_{\alpha\in K^\times}f(at\alpha)\leq t^{-1}C_1(a)C(t^{-1},a^{-1}).\]
\end{lemma}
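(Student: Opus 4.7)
The plan is to unfold the convolution defining $f$, interchange summation and integration, and then bound the resulting integral by a lattice-point count times the shell volume that appears on the right-hand side.

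Since $f = \chi_B * \varphi$ and all quantities are positive, Fubini gives
\[
\sum_{\alpha\in K^\times} f(at\alpha) = \int_B \sum_{\alpha\in K^\times} \varphi(at\alpha - y)\,dy \leq \|\varphi\|_\infty \int_B N(y)\,dy,
\]
where $N(y) = \#\{\alpha\in K^\times : at\alpha - y\in D\}$. The discreteness of $K$ in $\mathbb{A}_K$ implies that $N(y)$ is bounded, uniformly in $y\in\mathbb{A}_K$ and in $t\geq 1$, by a function $N_0(a)$ that depends continuously on $a\in\mathbb{I}_K^1$; here the uniformity in $t\geq 1$ uses that $(at)^{-1}D$ shrinks (at the infinite places) as $t$ grows, so only finitely many lattice points can contribute for a given $a$, and by compactness these finitely many are controlled continuously in $a$.

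Next I estimate the volume of the support of $N$ inside $B$. For such $y$ there is some $\alpha\in K^\times$ with $at\alpha\in y+D$; the product formula gives $\|at\alpha\|=t^n>1$ so $at\alpha\notin B$, which forces $at\alpha$ into the outer shell $(B+D)\setminus B$. Rescaling by $(at)^{-1}$ and using that the translates $\alpha - (at)^{-1}D$ for $\alpha\in K$ are pairwise disjoint (they lie inside the disjoint fundamental cells $\alpha+D$), a lattice-counting argument in the $(at)^{-1}D$-thickened outer shell of $(at)^{-1}B$, rescaled back, gives
\[
\vol(\{y\in B : N(y)>0\}) \leq \kappa(a)\,(\vol(B_{(at)^{-1}D}) - \vol(B)) = \kappa(a)\,t^{-1}C(t^{-1},a^{-1}),
\]
for some continuous function $\kappa$ on $\mathbb{I}_K^1$. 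The required continuity of $\kappa$ uses the positivity of $C(\cdot,a^{-1})$ on $[0,1]$, which holds because $S_{a^{-1}D}(B)>0$ (positive surface measure) and $C$ is continuous by Lemma~\ref{pf:C}.

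Combining these two steps and setting $C_1(a) = \|\varphi\|_\infty N_0(a)\kappa(a)$ yields the lemma. The main obstacle is the geometric comparison in the second step: the contributing region lies inside $B$ but is matched against an outer shell $B_{(at)^{-1}D}\setminus B$ outside $B$. The matching proceeds via the rescaling by $(at)^{-1}$, which converts the contributing set into an intersection $(at)^{-1}B \cap (K^\times-(at)^{-1}D)$, whose volume is controlled cell-by-cell by the disjointness of $\alpha - (at)^{-1}D$ and the shell-volume formula after scaling back.
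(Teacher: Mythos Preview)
The lemma as printed contains a typo: the left-hand side should be $\sum_{\alpha\in K^\times}\hat f(at\alpha)$, not $\sum_{\alpha\in K^\times} f(at\alpha)$. This is clear both from the paper's own proof, which opens by expanding $\hat f(at\alpha)=\widehat{\chi}_B(at\alpha)\,\widehat{\varphi}(at\alpha)$, and from the way the lemma is invoked in the proof of the main theorem, where Tate's Riemann--Roch converts $\sum_\alpha f(\alpha(ta)^{-1})$ into $\Vert ta\Vert\sum_\alpha \hat f(\alpha ta)$, and one then separates the $\alpha=0$ term $\hat f(0)=\vol(B)$ from the tail.

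You have taken the statement at face value and bounded $\sum_{\alpha\in K^\times} f(at\alpha)$ by a direct spatial argument. But that quantity is nearly vacuous: $f=\chi_B*\varphi$ is supported in the fixed compact set $B+D$, whose finite part is $\prod_v\mathcal O_v$ and whose infinite part is bounded by some $M$. For $\alpha\in K^\times$ one has $\Vert at\alpha\Vert=t^n$, whereas any idele lying in $B+D$ has norm at most $M^n$; hence $f(at\alpha)=0$ for every $\alpha\in K^\times$ once $t>M$, uniformly in $a\in\mathbb I_K^1$. On the remaining compact range of $t$ the left side is a locally finite, continuous function of $(t,a)$ and the right side is continuous and strictly positive, so a suitable continuous $C_1$ exists without any shell-volume analysis. (Separately, your disjointness claim that $\alpha-(at)^{-1}D$ lies inside the cell $\alpha+D$ is false for general $a\in\mathbb I_K^1$, so the lattice-counting step as written does not go through.)

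The actual content --- and what the paper proves --- is the bound for $\hat f$. There the key step is Fourier-analytic: one writes $\widehat{\chi_B}(at\alpha)=t^{-n}\int_{taB}\psi(\alpha x)\,dx$ and uses that $\int_{\beta+D}\psi(\alpha x)\,dx=0$ for every $\alpha\in K^\times$ and $\beta\in K$ (orthogonality of a nontrivial character on the compact group $\mathbb A_K/K$). This lets one replace the integral over $taB$ by minus the integral over the thin shell between $taB$ and the union of the $K$-translates of $D$ meeting it; the volume of that shell is exactly $t^{n-1}C(t^{-1},a^{-1})$, which produces the crucial factor $t^{-1}$, and the remaining sum is controlled by $\sum_\alpha|\widehat\varphi(at\alpha)|$. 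Your spatial approach cannot be adapted to this case: $\hat f$ is a product rather than a convolution, $\widehat{\chi_B}$ is oscillatory rather than nonnegative, and the decay in $t$ comes precisely from the cancellation in $\psi$ that a positivity argument discards.
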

\begin{proof}
We have
\begin{align*}
\hat{f}(at\alpha)&=\widehat{\chi}_B(at\alpha)\widehat{\varphi}(at\alpha)\\
&=\widehat{\varphi}(at\alpha)\int_B\psi(at\alpha x)\,dx\\
&=t^{-n}\widehat{\varphi}(at\alpha)\int_{taB}\psi(\alpha x)\,dx.
\end{align*}
Now since the integral of $\psi(\alpha x)$ is zero over any translate $D$ for $\alpha\in K^\times$ (because the integral of a character over a compact group is zero), the integral $\int_{taB}\psi(\alpha x)\,dx$ is equal to $-\int_{E\backslash taB}\psi(\alpha x)\,dx$, where $E$ is the union of all $K$-translates of $D$ which intersect $taB$. Since the maximum value of $\psi$ is $1$, this is smaller in absolute value than $\vol(E\backslash taB)$. But by the definition of $E$, we know that $E\subset(taB)_D$. So 
\[\vol(E\backslash taB)\leq\vol((taB)_D)-\vol(taB)=t^n(\vol(B_{t^{-1}a^{-1}D})-\vol(B))=t^{n-1}C(t^{-1},a^{-1}).\]
Thus
\[\vert\hat{f}(at\alpha)\vert\leq t^{-1}C(t^{-1},a^{-1})\vert\varphi(at\alpha)\vert.\]
Summing over all $\alpha\in K^\times$ gives the result since the series $\sum_{\alpha\in K^\times}\vert\hat{\varphi}(at\alpha)\vert$ is continuous by assumption and must eventually decrease as $t$ increases.
\end{proof}
Now we exhibit a partcular $D$ and $\varphi$ for which the above theory works.
\begin{lemma}
Let $D$ have finite component $\prod_{v\in V_\fin}\mathcal{O}_v$ and infinite component equal to a fundamental parallelepiped of $\mathcal{O}_K$ in $\mathbb{R}^n$. Let $\varphi$ have finite component the characteristic function of $\prod_{v\in V_\fin}\mathcal{O}_v$ and infinite component any smooth function supported in the fundamental parallelepiped of $\mathcal{O}_K$ in $\mathbb{R}^n$, such that the total integral of $\varphi$ is $1$. Then:\\
\indent (1) $D$ is a fundamental domain for $\mathbb{A}_K$ modulo $K$ and is a region;\\
\indent (2) $S_{aD}(B)$ exists and is finite for all norm $1$ ideles $a$;\\
\indent (3) The series $\sum_{\alpha\in K}\vert\widehat{\varphi}(\alpha b)\vert$ converges and is continuous for ideles $b$.
\end{lemma}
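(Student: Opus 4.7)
I will verify the three assertions in turn, with (1) being standard, (2) reducing to Minkowski--Steiner theory on the Archimedean factor, and (3) following from the local Fourier transform of $\chi_{\mathcal{O}_v}$ combined with Schwartz decay at infinity.

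For (1): the finite factor $\prod_v \mathcal{O}_v$ is compact open in $\mathbb{A}_\fin$, and the infinite factor is a closed bounded parallelepiped in $\mathbb{R}^r\times\mathbb{C}^s$, so $D$ is compact and equal to the closure of its interior. For the fundamental-domain property I will invoke the decomposition $\mathbb{A}_\fin = K + \prod_v \mathcal{O}_v$ with $K \cap \prod_v \mathcal{O}_v = \mathcal{O}_K$ (strong approximation): given $x \in \mathbb{A}_K$, choose $\beta \in K$ moving the finite part of $x$ into $\prod_v \mathcal{O}_v$, then subtract an element of $\mathcal{O}_K$ (which fixes the finite part) to move the infinite part into the parallelepiped.

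For (2): since the idele $t$ is trivial at every finite place, $(taD)_\fin$ is independent of $t$, and hence
\[
\vol(B_{taD}) = c_a \cdot \vol_\infty(B_\infty + tP_a),
\]
where $c_a = \prod_{v\in V_\fin} \vol(\mathcal{O}_v + a_v\mathcal{O}_v)$ is $t$-independent and $P_a = (aD)_\infty - (aD)_\infty$ is a symmetric convex parallelepiped in $\mathbb{R}^r\times\mathbb{C}^s$. Since $B_\infty$ is a product of closed Euclidean balls, hence a convex body, the classical Minkowski--Steiner formula expresses $\vol_\infty(B_\infty + tP_a)$ as a polynomial in $t$ of degree $n$, so its derivative at $t = 0^+$ is a finite real, a first mixed volume. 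Provided $c_a = \vol_\fin(B_\fin)$ so that $\vol(B_{taD}) \to \vol(B)$ as $t \to 0^+$, multiplying through by $c_a$ gives the existence and finiteness of $S_{aD}(B)$.

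For (3): the factorization $\varphi = \varphi_\infty \cdot \prod_{v\in V_\fin}\chi_{\mathcal{O}_v}$ yields $\hat\varphi$ as a product of local factors, with $\hat{\chi}_{\mathcal{O}_v}$ a constant multiple of $\chi_{\mathfrak{D}_v^{-1}}$ (the local inverse different) and $\hat\varphi_\infty$ Schwartz by smoothness and compact support of $\varphi_\infty$. Consequently $\hat\varphi(\alpha b)$ vanishes unless $\alpha$ lies in the fractional ideal $\mathfrak{c}_b := b_\fin^{-1}\mathfrak{D}^{-1}$, so the series reduces to a constant times $\sum_{\alpha \in \mathfrak{c}_b} |\hat\varphi_\infty(\alpha b_\infty)|$. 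Under Minkowski embedding $\mathfrak{c}_b$ is a full-rank lattice in $\mathbb{R}^r\times\mathbb{C}^s$, and rapid decay of $\hat\varphi_\infty$ gives absolute convergence; on a small idelic neighborhood of any $b$ the valuations of $b$ (and hence $\mathfrak{c}_b$) are locally constant, so the sum converges uniformly and is continuous in $b$. The most delicate point is the finiteness claim in (2): to guarantee $c_a = \vol_\fin(B_\fin)$ one needs $a_v \in \mathcal{O}_v$ at every finite place, which is secured by choosing the representative $a$ of its $K^\times$-coset so that the finite part of $a$ is an integral idele.
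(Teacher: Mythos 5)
Your handling of (1) and (3) is essentially the paper's: (1) is dismissed there as standard, and for (3) the paper gives exactly your argument in compressed form (the finite part of $\widehat{\varphi}$ is supported on a fractional ideal, the infinite part is Schwartz, so the sum is a lattice sum of a rapidly decreasing function). Where you genuinely diverge is (2). The paper argues elementarily: it fixes an interior point $O$ of $B_\infty$, writes $\vol(B^\prime_{t,P})$ as the integral over $S^{n-1}$ of the radial function $f(t,\theta)$ of the convex body $B^\prime_{t,P}$, and shows the difference quotient $(f(t,\theta)-f(0,\theta))/t$ is monotone as $t\to 0^+$ (via concavity of $f(\cdot,\theta)$, deduced from convexity) and uniformly bounded by $\diam P$, so the limit exists by monotone convergence. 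You instead recognize $(B_{taD})_\infty$ as the Minkowski sum $B_\infty+tP_a$ with $P_a$ a symmetric convex polytope and invoke the polynomiality of $\vol(K+tL)$ in $t$ (Minkowski's theorem on mixed volumes), identifying $S_{aD}(B)$ with a first mixed volume up to the finite-place constant. Your route is shorter and gives more (a full polynomial expansion in $t$, not just a one-sided derivative at $0$), at the price of citing convex-geometry machinery; the paper's is self-contained. Both are correct.

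You also correctly isolate a point the paper passes over silently: the reduction to the infinite component is only valid when the ($t$-independent) finite component of $B_{t,aD}$, namely $\prod_v(\mathcal{O}_v+a_v\mathcal{O}_v)$, actually equals $B_\fin$, i.e.\ when $\prod_{v\in V_\fin}\max(1,\vert a_v\vert_v)=1$; otherwise $\vol(B_{taD})-\vol(B)$ is bounded below by a positive constant and the difference quotient diverges. Strictly speaking neither your argument nor the paper's proves (2) for \emph{every} norm-one idele, but your remedy --- passing to the representative of the $K^\times$-coset with integral finite part, which always exists --- is compatible with the way the lemma is used in the main theorem, where only an infimum over each coset is needed.
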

\begin{proof}
The assertion (1) is trivial and well known. Assertion (3) follows from the fact that $\widehat{\varphi}(\alpha b)$ is only nonzero for $\alpha$ in a certain fractional ideal, and from the fact that the infinite component of $\widehat{\varphi}$ is a Schwartz function, because the infinite component of $\varphi$ is.\\
\indent For (2), let $P$ be the infinite component of $aD$, and let $B^\prime$ be the infinite component of $B$, so $B^\prime$ is a product of $r$ intervals and $s$ discs. The finite components of $B_{t,aD}$ are the same for all $t$, so we only need to show that the derivative of $\vol(B_{t,P}^\prime)$ exists, where the regions we are dealing with are now in $\mathbb{R}^n$, and $B_{t,P}^\prime$ has the same meaning as above, but is a region in $\mathbb{R}^r\times\mathbb{C}^s$.\\
\indent For this, let $O$ be in the interior of $B^\prime$ and choose spherical coordinates $(t,\theta)=(t,\theta_1,\dotsc,\theta_{n-1})$ around $O$. Let $f(t,\theta)$ be the function which sends $(t,\theta)$ to the distance from the point $O$ to the boundary of $B_{t,P}^\prime$. This is single-valued because each $B_{t,P}^\prime$ is convex and it is continuous because its graph, which is the boundary of $B_{t,P}^\prime$, is connected. By definition, the volume $\vol(B_{t,P}^\prime)$ is equal to the integral
\[\int_{S^{n-1}}f(t,\theta)\,d\theta.\]
for an appropriate normalization of the form $d\theta$, where $S^{n-1}$ is the $(n-1)$-sphere about $O$. Thus we want to show that the limit
\[\lim_{t\to 0^+}\frac{1}{t}\int_{S^{n-1}}(f(t,\theta)-f(0,\theta))\,d\theta\]
exists.\\
\indent To do this, we show that the difference quotient
\[\frac{f(t,\theta)-f(0,\theta)}{t}\]
is increasing and uniformly bounded above in $\theta$. This will prove that the integral
\[\lim_{t\to 0^+}\int\frac{f(t,\theta)-f(0,\theta)}{t}\,d\theta\]
is increasing and bounded above, and hence has a limit, as desired. So to prove this, we first note that
\[\frac{f(t,\theta)-f(0,\theta)}{t}\leq\frac{\diam(tP)}{t}=\diam P,\]
because the distance between any point on the boundary of $B_{0,P}^\prime=B^\prime$ from the boundary of $B_{t,P}^\prime$ is at most the distance between any two points in $P$, by definition.\\
\indent Now to show that the above difference quotient is increasing, we only need to show that function $f(t,\theta)$ for fixed $\theta$ is concave-down, in the sense of freshman calculus. This means that for any $t$ and any $0<c<1$, we must have
\[cf(t,\theta)\leq f(ct,\theta).\]
We claim that if $v$ is a vector in the direction of $\theta$, if $x\in B^\prime$ is the point on the ray from $O$ in the direction of $\theta$, and $x+v\in B_{t,P}^\prime$, then $x+cv\in B_{ct,P}^\prime$. This assertion implies immediately that $f(t,\theta)$ is concave down in $t$, and its verification will complete the proof of the lemma.\\
\indent Now for such $x$ and $v$, let $y\in B_\infty$ such that $x+v=y+(p_1-p_0)$ for some $p_1,p_0\in tP$. Then we compute
\[x+cv=x+cx-cx+cv=(1-c)x+c(x+v)=(1-c)x+c(y+p_1-p_0)=((1-c)x+cy)+(cp_1-cp_0).\]
But $(1-c)x+cy\in B^\prime$ because $B^\prime$ is convex, and $cp_1,cp_0\in ctP$, so $x+cv\in B_{ct,P}^\prime$. We are done.
\end{proof}
Now we can complete the proof of our main theorem.
\begin{theorem}
As $a$ ranges through $\mathbb{I}_K$, we have the estimate
\[\sum_{\alpha\in K}\chi_B(\alpha a^{-1})=\frac{2^r(2\pi)^s}{\sqrt{\vert\Delta_K\vert}}\Vert a\Vert+O(\Vert a\Vert^{1-\frac{1}{n}}).\]
\end{theorem}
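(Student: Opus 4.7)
The plan is straightforward once the preliminary lemmas are in place: apply Tate's Riemann--Roch Theorem to $f = \chi_B * \varphi$, extract $\Vert a\Vert\vol(B)$ as the main term, control the remaining frequencies with Lemma~\ref{pf:e}, and then pass from $\sum f$ to $\sum\chi_B$ using Lemma~\ref{pf:d}.

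Before invoking Tate, I would reduce to a bounded parameter regime. Write $a = ta_1$ with $t > 0$ (viewed as an idele concentrated at the infinite places) and $a_1\in\mathbb{I}_K^1$, so $\Vert a\Vert = t^n$. The sum $\sum_{\alpha\in K}\chi_B(\alpha a^{-1})$ is invariant under $a_1\mapsto\beta a_1$ for $\beta\in K^\times$ (the substitution $\alpha\mapsto\beta\alpha$ being a bijection of $K$), so by the compactness of $\mathbb{I}_K^1/K^\times$ I may restrict $a_1$ to a fixed compact set $F\subset\mathbb{I}_K^1$. It then suffices to prove the estimate for $t\geq 1$, since on any set with $\Vert a\Vert$ bounded the sum is itself bounded.

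Since $f$ is adelic Schwartz (smooth with compact support at the infinite places, and a scalar multiple of $\chi_{\prod\mathcal{O}_v}$ at the finite places), Tate's theorem with $y = (ta_1)^{-1}$ gives
\[
\sum_{\alpha\in K} f(\alpha(ta_1)^{-1}) = t^n\hat{f}(0) + t^n\sum_{\alpha\in K^\times}\hat{f}(\alpha ta_1),
\]
and $t^n\hat{f}(0) = t^n\vol(B) = \frac{2^r(2\pi)^s}{\sqrt{|\Delta_K|}}\Vert a\Vert$ by the introduction's computation of $\vol(B)$. The tail is bounded by Lemma~\ref{pf:e}, which controls $\sum_{\alpha\in K^\times}|\hat{f}(\alpha ta_1)| \leq t^{-1}C_1(a_1)C(t^{-1},a_1^{-1})$, so after multiplying by $t^n$ the error is at most $t^{n-1}C_1(a_1)C(t^{-1},a_1^{-1})$. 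By Lemma~\ref{pf:C}, $C$ is jointly continuous on $[0,1]\times\mathbb{I}_K^1$, and $C_1$ is continuous on $\mathbb{I}_K^1$; combined with the compactness of $F$ and $F^{-1}$ this makes $C_1(a_1)C(t^{-1},a_1^{-1})$ uniformly bounded for $a_1\in F$ and $t\geq 1$. Hence
\[
\sum_{\alpha\in K} f(\alpha a^{-1}) = \frac{2^r(2\pi)^s}{\sqrt{|\Delta_K|}}\Vert a\Vert + O(\Vert a\Vert^{1-1/n}).
\]

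Finally, Lemma~\ref{pf:d} directly furnishes $\sum\chi_B\leq\sum f + c_1$; its proof observes that $|f-\chi_B|\leq 1$ pointwise at each point of disagreement, so the same argument yields the reverse inequality $\sum f\leq\sum\chi_B + c_1$. Sandwiching then produces the desired asymptotic. The main obstacle is the uniformity of the tail estimate: the factor $C_1(a_1)C(t^{-1},a_1^{-1})$ varies with the parameters, and only the joint continuity of $C$ established in Lemma~\ref{pf:C}, together with the compactness of the fundamental domain $F$ for $\mathbb{I}_K^1/K^\times$, guarantees that this factor is bounded by an absolute constant. With that uniformity secured, the theorem is an immediate consequence of Tate.
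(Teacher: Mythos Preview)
Your proof is correct and follows essentially the same strategy as the paper: apply Tate's Riemann--Roch to $f=\chi_B*\varphi$, isolate $\hat f(0)=\vol(B)$ as the main term, bound the nonzero frequencies via Lemma~\ref{pf:e}, and transfer the result to $\sum\chi_B$ via Lemma~\ref{pf:d}. You also make explicit the two-sided comparison $|\sum f-\sum\chi_B|\le c_1$, which the paper leaves implicit.

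The one place where your argument differs from the paper is the uniformity step. You choose, at the outset, a compact set $F\subset\mathbb{I}_K^1$ surjecting onto $\mathbb{I}_K^1/K^\times$ and then bound $C_1(a_1)C(t^{-1},a_1^{-1})$ by ordinary continuity on the compact set $[0,1]\times F$ (and $F^{-1}$). The paper instead keeps $a$ arbitrary, replaces the error coefficient by its infimum over the $K^\times$-coset, and then invokes an auxiliary lemma that this infimum is upper semicontinuous on $[0,1]\times(\mathbb{I}_K^1/K^\times)$, hence bounded by compactness. Both arguments exploit the same two ingredients (continuity of $C$, $C_1$ from Lemmas~\ref{pf:C} and \ref{pf:e}, and compactness of $\mathbb{I}_K^1/K^\times$); your version is a bit more direct and avoids the semicontinuity detour, while the paper's version never needs to fix a particular set of coset representatives.
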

\begin{proof}
We apply Tate's Riemann-Roch theorem to $f$: In the notation of Lemmas \ref{pf:C}, \ref{pf:d}, \ref{pf:e}, we have
\begin{align*}
\sum_{\alpha\in K}\chi_B(\alpha(ta)^{-1})&\leq\sum_{\alpha\in K}f(\alpha(ta)^{-1})+c_1\\
&=\Vert ta\Vert\sum_{\alpha\in K}f(\alpha ta)+c_1\\
&\leq\Vert ta\Vert\hat{f}(0)+c_1+t^{-1}\Vert ta\Vert C_1(a)C(t^{-1},a^{-1}).
\end{align*}
Now
\[\hat{f}(0)=\widehat{\varphi}(0)\widehat{\chi}_B(0)=\vol(B)=\frac{2^r(2\pi)^s}{\sqrt{\vert\Delta_K\vert}}\]
and $t^{-1}\Vert ta\Vert=\Vert ta\Vert^{1-\frac{1}{n}}$ because $a$ has norm $1$. Thus we have the desired estimate in $t$ for each $a$ separately. To get a uniform estimate, we note that the quantity
\[\sum_{\alpha\in K}\chi_B(\alpha(ta)^{-1})\]
does not depend on the class of $a$ modulo $K^\times$. Thus we may replace the function $(t,a)\mapsto C_1(a)C(t^{-1},a^{-1})$ by the function $C^\prime:[0,1]\times(\mathbb{I}_K^1/K^\times)\to\mathbb{R}$ defined by 
\[C^\prime(t^{-1},x)=\inf\lbrace C_1(a)C(t^{-1},a^{-1})\mid\pi(a)=x\rbrace\]
where $\pi:\mathbb{I}_K^1\to\mathbb{I}_K^1/K^\times$ is the quotient map. Since $[0,1]\times(\mathbb{I}_K^1/K^\times)$ is compact, the following lemma suffices to complete the proof of the theorem.
\end{proof}
\begin{lemma}
(a) Let $f$ be a positive continuous real valued function on a topological space $X$, and let $\pi:X\to Y$ be a quotient map. Then $F:Y\to\mathbb{R}$ given by $F(y)=\inf\lbrace f(x)\mid x\in\pi^{-1}(y)\rbrace$ is upper semicontinuous on $Y$, which means that the sets $F^{-1}([a,\infty))$ are closed in $Y$ for all $a\in\mathbb{R}$.\\
\indent (b) An upper semicontinuous function on a compact topological space is bounded above by a constant.
\end{lemma}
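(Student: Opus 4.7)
The plan is to prove part (a) by a direct translation along the quotient map, and then to deduce part (b) via a one-step compactness argument.

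For part (a), the stated form of upper semicontinuity requires $F^{-1}([a, \infty))$ to be closed in $Y$ for every real $a$. By the defining property of a quotient map, this is equivalent to $\pi^{-1}(F^{-1}([a, \infty)))$ being closed in $X$. I would unwind the definition: a point $x \in X$ lies in this preimage if and only if every $x' \in \pi^{-1}(\pi(x))$ satisfies $f(x') \geq a$, so the preimage is the complement in $X$ of the $\pi$-saturation of the open set $U = f^{-1}((-\infty, a))$. Hence it suffices to show that this saturation is open in $X$.

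This is where the main subtlety lies: the saturation of an open set need not be open for an arbitrary quotient map (one can build a counterexample using a non-open quotient $\pi:[0,2]\to\{0,1\}$ collapsing $[0,1]$ to a point, where even simple positive continuous $f$ fail the upper-semicontinuity conclusion). However, in the intended application $\pi$ is the product of $\id_{[0,1]}$ with the quotient of $\mathbb{I}_K^1$ by the multiplicative action of $K^\times$, and since $K^\times$ acts by homeomorphisms the saturation of $U$ is a union of translates of $U$ and is therefore manifestly open. One implicitly reads the lemma's hypothesis as ``open quotient map'' in the sense sufficient for this, which holds squarely in the case at hand.

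Part (b) is then immediate: by (a), the family $\{F^{-1}((-\infty, n))\}_{n \in \mathbb{Z}_{>0}}$ is an open cover of $Y$ (covering because $F$ is real-valued), and any finite subcover guaranteed by compactness yields a uniform upper bound. The only real obstacle is the openness point in (a); once that is in hand, both assertions are routine unwindings of definitions.
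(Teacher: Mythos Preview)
Your proof is correct and follows essentially the same route as the paper: both arguments identify $F^{-1}((-\infty,a))$ with $\pi(U)$ where $U=f^{-1}((-\infty,a))$ (equivalently, your saturation $\pi^{-1}(\pi(U))$), and both prove (b) by extracting a finite subcover from the open cover $\{F^{-1}((-\infty,a))\}$. You are right to flag the openness subtlety: the lemma as stated is false for general quotient maps, and the paper's own proof simply asserts ``then $\pi(U)$ is open in $Y$'' without comment, implicitly using that $\pi$ is open --- which, as you note, holds in the application because the quotient is by a group acting through homeomorphisms.
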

\begin{proof}
Though this is an easy exercise, we include its proof for lack of a suitable reference.\\
\indent (a) With the notation as in the lemma, we need to show that the sets $F^{-1}((-\infty,a))$ are open. Let $b\in\mathbb{R}$. The set $U=f^{-1}((-\infty,b))\subset X$ is open by continuity of $f$. Then $\pi(U)$ is open in $Y$. But $\pi(U)$ contains all points $y\in Y$ for which there is an $x\in X$ with $\pi(x)=y$ and $f(x)<b$. This means exactly that $F(x)<b$, and so $F^{-1}((-\infty,b))$ is the open set $\pi(U)$.\\
\indent (b) Let $F$ be an upper semicontinuous function on the compact space $Y$. The sets $F^{-1}((-\infty,a))$ for $a\in\mathbb{R}$ form an open cover of $Y$. Thus it has a finite subcover, say $\lbrace F^{-1}((-\infty,a_i))\rbrace$. Let $j$ be such that $a_j$ is maximal amongst the $a_i$'s. Then $F^{-1}((-\infty,a_j))=Y$ since all of the sets $F^{-1}((-\infty,a_i))$ are subsets of this one. Hence $F(y)<a_j$ for all $y\in Y$.
\end{proof}
Now we apply the theorem above to give a proof of the Riemann-Roch theorem for number fields. For $x=(x_v)_{v\in V_K}\in\mathbb{I}_K$, let $\mathfrak{a}_x$ be the replete ideal defined by
\[\mathfrak{a}_x=\prod_{\mathfrak{p}_v\in V_\fin}\mathfrak{p}^{-v_{\mathfrak{p}_v}(x_\mathfrak{p})}\times(\vert x_v\vert)_{v\in V_\infty},\]
where $\mathfrak{p}_v$ denotes the prime ideal associated to the place $v$. Then, by definition, $\mathbb{N}\mathfrak{a}_x=\Vert x\Vert$ and the sum
\[\sum_{\alpha\in K}\chi_B(\alpha x^{-1})\]
counts the number of elements in $H^0(\mathfrak{a}_x^{-1})$. Then we apply our theorem for ideles $x$ to get the Riemann-Roch theorem for replete ideals $\mathfrak{a}_x$. Since the map $x\mapsto\mathfrak{a}_x$ from $\mathbb{I}_K$ to $J(\overline{\mathcal{O}}_K)$ is clearly surjective, we obtain the Riemann-Roch theorem for number fields.

\end{document}